\newtheorem{theorem}{Theorem}[section]
\newtheorem{lemma}[theorem]{Lemma}
\newtheorem{conjecture}[theorem]{Conjecture}
\theoremstyle{definition}
\newtheorem{definition}[theorem]{Definition}
\newtheorem{remark}[theorem]{Remark}
\newtheorem{example}[theorem]{Example}
\DeclareMathOperator{\Conf}{Conf}
\DeclareMathOperator{\Link}{Link}
\DeclareMathOperator{\Mat}{Mat}
\DeclareMathOperator{\Supp}{Supp}
\begin{document}

\title[Models for configuration space in a simplicial complex]{Models for configuration space \\ in a simplicial complex}

\author{John D. Wiltshire-Gordon}
\address{\tiny{UW-Madison Department of Mathematics\\
Van Vleck Hall\\
480 Lincoln Drive\\
Madison, WI 53706}}
\email{jwiltshiregordon@gmail.com}

\date{June 20, 2017}

\begin{abstract}
We produce combinatorial models for configuration space in a simplicial complex, and for configurations near a single point (``local configuration space.'')  The model for local configuration space is built out of the poset of poset structures on a finite set.  The model for global configuration space relies on a combinatorial model for a simplicial complex with a deleted subcomplex.  By way of application, we study the nodal curve $y^2 z = x^3 + x^2 z$, obtaining a presentation for its two-strand braid group, a conjectural presentation for its three-strand braid group, and presentations for its two- and three-strand local braid groups near the singular point.
 \end{abstract}

\subjclass[2010]{Primary 55R80; Secondary 55U05}

\keywords{configuration space, simplicial difference, braid groups, combinatorial models}

\maketitle

\section{Introduction}
\noindent
If $Z$ is a topological space, define the \textbf{configuration space} of $n$-tuples
$$
\Conf(n, Z) = \{ (z_1, \ldots, z_n) \in Z^n \mbox{ so that $i \neq j \implies z_i \neq z_j$ } \}.
$$
Let $X$ be an abstract simplicial complex, and write $|X|$ for its geometric realization.  This note provides combinatorial models for $\Conf(n, |X|)$ and  $\Conf(n, C^{\circ}|X|)$, where $C^{\circ}|X|$ denotes the open cone on $|X|$.

\begin{definition}
Given a partial ordering of the vertices of $X$ that restricts to a total order on every face $\sigma \subseteq X$, a \textbf{conf matrix}
is a matrix $( v_{ij} )$ of vertices in $X$ so that
\begin{enumerate}
\item each row is weakly monotone increasing in the vertex-ordering,
\item for every row, the vertices appearing within that row form a face of $X$, and
\item no row is repeated. \label{item:distinct}
\end{enumerate}
A conf matrix is called \textbf{minimal} if deleting a column results in duplicate rows, no matter which column is deleted.
\end{definition}
\begin{example} \label{example:triangle}
If $X$ has vertices $\{1, 2, 3\}$, ordered as usual, and facets $\{12, 13, 23\}$, then there are $60$ minimal conf matrices with three rows.  By (\ref{item:distinct}), the symmetric group $S_3$ acts freely on these matrices by permuting the rows.  We list a representative from each orbit:
$$
\scalebox{.66}{\mbox{$
\left[
\begin{array}{c}
 1 \\
 2 \\
 3 \\
\end{array}
\right]\;\left[
\begin{array}{cc}
 1 & 1 \\
 1 & 2 \\
 2 & 2 \\
\end{array}
\right]\;\left[
\begin{array}{cc}
 1 & 1 \\
 1 & 3 \\
 2 & 3 \\
\end{array}
\right]\;\left[
\begin{array}{cc}
 1 & 1 \\
 1 & 3 \\
 3 & 3 \\
\end{array}
\right]\;\left[
\begin{array}{cc}
 1 & 2 \\
 1 & 3 \\
 2 & 2 \\
\end{array}
\right]\;
\left[
\begin{array}{cc}
 1 & 2 \\
 1 & 3 \\
 2 & 3 \\
\end{array}
\right] \;
\left[
\begin{array}{cc}
 1 & 2 \\
 1 & 3 \\
 3 & 3 \\
\end{array}
\right] \;
\left[
\begin{array}{cc}
 1 & 2 \\
 2 & 2 \\
 2 & 3 \\
\end{array}
\right]\;\left[
\begin{array}{cc}
 1 & 3 \\
 2 & 2 \\
 2 & 3 \\
\end{array}
\right]\;\left[
\begin{array}{cc}
 2 & 2 \\
 2 & 3 \\
 3 & 3 \\
\end{array}
\right]
$.}}
$$
\end{example}
\begin{theorem} \label{theorem:global}
Let $C(n, X)$ be the simplicial complex whose vertices are the minimal conf matrices for $X$ with $n$ rows, and where a collection of matrices forms a face if their columns can be assembled into a single conf matrix. There is a homotopy equivalence
$$
|C(n, X) | \simeq \Conf(n, |X|).
$$
\end{theorem}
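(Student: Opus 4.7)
The plan is to realize $|C(n,X)|$ as the nerve of a natural good open cover of $\Conf(n,|X|)$ indexed by minimal conf matrices, and then apply the nerve lemma. For each conf matrix $M$ with $k$ columns and $n$ rows, consider the parametrization map
$$ \phi_M : \Delta^{k-1} \longrightarrow |X|^n, \qquad \phi_M(t_1,\ldots,t_k)_i \;=\; \sum_{j=1}^k t_j\,v_{ij}, $$
interpreting each row's sum as a barycentric combination in the face of $X$ spanned by its entries. The first step is to check that $\phi_M$ carries the open simplex $\mathring{\Delta}^{k-1}$ into $\Conf(n,|X|)$: if $r_a \ne r_b$ are two rows and $j$ is the first column at which they disagree, say $v_{aj} < v_{bj}$ in the vertex order, then by monotonicity the vertex $v_{aj}$ does not occur in row $b$ at any column $\ge j$, so the $v_{aj}$-barycentric coordinate of the $a$-th point exceeds that of the $b$-th point by at least $t_j > 0$. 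When $M$ is also minimal, every column $j$ is ``witnessed'' by a pair of rows agreeing off column $j$, and the same difference computation recovers $t_j$ from the image; hence $\phi_M|_{\mathring{\Delta}^{k-1}}$ is injective.

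Next, assemble these maps into a cover. For each minimal conf matrix $M$, let $U_M \subseteq \Conf(n,|X|)$ be the set of configurations admitting a presentation $\phi_{M'}(s)$ with $s$ in the open simplex and $M'$ a (not necessarily minimal) conf matrix whose column multiset contains the columns of $M$. Each $U_M$ is open, and the collection covers $\Conf(n,|X|)$: for a configuration $(z_1,\ldots,z_n)$, reading off the barycentric coordinates of each $z_i$ in its ambient open face produces a conf matrix that exhibits the configuration as lying in $U_M$ for at least one minimal submatrix $M$.

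The central combinatorial claim is the nerve identification: $U_{M_1} \cap \cdots \cap U_{M_r} \ne \emptyset$ if and only if $\{M_1,\ldots,M_r\}$ forms a face of $C(n,X)$. Given a point in the intersection, the coarsest common refinement of its presentations is an assembly of the $M_i$; conversely, an assembly $M^{\ast}$ produces a configuration in every $U_{M_i}$ via $\phi_{M^{\ast}}$. When nonempty, the intersection deformation retracts onto an open simplex (the image of $\phi_{M^{\ast}}$), which is contractible. Thus the cover is good, its nerve equals $C(n,X)$ by construction, and the nerve lemma yields $|C(n,X)| \simeq \Conf(n,|X|)$.

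The hardest step will be the coarsest-common-refinement claim: one must show that any finite family of jointly-assemblable minimal conf matrices admits a canonical common assembly that parametrizes an open-dense piece of the intersection, and that the operation of extracting minimal submatrices from an arbitrary conf matrix is combinatorially well-behaved enough to match the face relation of $C(n,X)$ exactly. Establishing these bijections rigorously, and in particular ruling out pathological overlaps of the $U_M$ that would spoil the nerve identification, is the delicate part; once they are in place the homotopy equivalence follows mechanically from the nerve lemma.
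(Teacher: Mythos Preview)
Your approach is the paper's approach at its core: both build a good open cover of $\Conf(n,|X|)$ indexed by minimal conf matrices and invoke the nerve lemma. The paper, however, packages this more cleanly by first proving a general lemma: for any subcomplex $A\subseteq Y$ of a simplicial complex $Y$, the \emph{simplicial difference} $Y\ominus A$ (vertices the minimal nonfaces of $A$ that are faces of $Y$; a collection of these is a face when their union is a face of $Y$) satisfies $|Y\ominus A|\simeq |Y|-|A|$. That lemma is proved using the open stars $U_Y(\sigma)=\{\alpha\in|Y|:\alpha_v>0\text{ for all }v\in\sigma\}$, which are star-shaped around the barycenter $b_\sigma$, together with the identity $U_Y(\sigma_1)\cap\cdots\cap U_Y(\sigma_r)=U_Y(\sigma_1\cup\cdots\cup\sigma_r)$ whenever the union is a face of $Y$, and empty otherwise. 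The theorem then follows in one line by taking $Y=X^n$ (the standard product triangulation) and $A=F$ the fat diagonal, and observing that $C(n,X)=X^n\ominus F$ by definition.

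Recasting your construction in these terms dissolves the worries in your final paragraph. Your $U_M$ is exactly the open star $U_{X^n}(M)$, where $M$ is regarded as the set of its columns, i.e.\ a face of $X^n$. The ``coarsest common refinement'' of $M_1,\ldots,M_r$ is simply the union of their column sets; the intersection $U_{M_1}\cap\cdots\cap U_{M_r}$ equals $U_{X^n}(M_1\cup\cdots\cup M_r)$, nonempty precisely when that union is a face of $X^n$ (the columns admit a common row-wise ordering), and then visibly star-shaped around its barycenter. No ``canonical assembly parametrizing an open-dense piece'' or deformation retract onto an open simplex is needed, and your injectivity argument for $\phi_M|_{\mathring\Delta}$, while correct, plays no role. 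The only substantive thing you have left implicit is the identification of $\Conf(n,|X|)$ with $|X^n|-|F|$ via the product triangulation, which is what makes the columns-of-$M$ interpretation as vertices of $X^n$ work.
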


\begin{example}
The minimal conf matrices listed in Example~\ref{example:triangle} give the vertices of $C(3, X)$.  There are $48$ facets.  Up to $S_3$-symmetry, they correspond to the following conf matrices:
$$
\scalebox{.55}{\mbox{$
\left[
\begin{array}{cccc}
 1 & 1 & 1 & 2 \\
 1 & 1 & 3 & 3 \\
 2 & 3 & 3 & 3 \\
\end{array}
\right]\, \left[
\begin{array}{cccc}
 1 & 1 & 1 & 2 \\
 1 & 2 & 2 & 2 \\
 2 & 2 & 3 & 3 \\
\end{array}
\right]\,\left[
\begin{array}{cccc}
 1 & 1 & 1 & 2 \\
 1 & 3 & 3 & 3 \\
 2 & 2 & 3 & 3 \\
\end{array}
\right]\,\left[
\begin{array}{cccc}
 1 & 1 & 1 & 3 \\
 1 & 1 & 2 & 2 \\
 2 & 3 & 3 & 3 \\
\end{array}
\right]\,
\left[
\begin{array}{cccc}
 1 & 1 & 1 & 3 \\
 1 & 2 & 2 & 2 \\
 2 & 2 & 3 & 3 \\
\end{array}
\right]\,\left[
\begin{array}{cccc}
 1 & 1 & 2 & 2 \\
 1 & 3 & 3 & 3 \\
 2 & 2 & 2 & 3 \\
\end{array}
\right]\,\left[
\begin{array}{cccc}
 1 & 1 & 2 & 2 \\
 2 & 2 & 2 & 3 \\
 2 & 3 & 3 & 3 \\
\end{array}
\right]\,\left[
\begin{array}{cccc}
 1 & 1 & 3 & 3 \\
 1 & 2 & 2 & 2 \\
 2 & 2 & 2 & 3 \\
\end{array}
\right]$.}}
$$
The vertices of the facet corresponding to $M$ are the minimal conf matrices whose columns appear in $M$.
\end{example}

\begin{remark}[Symmetric group action]
The complex $C(n, X)$ carries a natural action of the symmetric group $S_n$ by permuting rows.  Under geometric realization, this action matches the usual $S_n$ action on configuration space $\Conf(n, |X|)$.
\end{remark}

\begin{remark}[Automorphisms of $X$]
The model produced in Theorem \ref{theorem:global} also carries any symmetry present in the simplicial complex $X$, provided this symmetry preserves the vertex-ordering.  
\end{remark}

Configuration space as we have described it so far may be termed ``global'' configuration space since each point is allowed to wander anywhere in $|X|$.  We now introduce \textbf{local configuration space}, modeling configuration space in the open cone near a point $p \in |X|$.  Passing to a subdivision if necessary, we may assume that $p$ is a vertex of $X$.  In this case, a small open neighborhood of $p$ is homeomorphic to the open cone on $\Link_X(p)$.  Consequently, finding a model for local configuration space for all $X$ and all $p \in |X|$ is the same as finding a model for $\Conf(n, C^{\circ} |X|)$ for all $X$, and we take this perspective.

\begin{remark}
The cohomology of local configuration space appears in the stalks of the higher direct images $R^q f_* \underline{\mathbb{Z}}$ where $f$ is the inclusion 
$$
f \colon \Conf(n, |X|) \hookrightarrow |X|^n.
$$
These sheaves feature prominently in Totaro's analysis of the Leray spectral sequence for this inclusion \cite{Totaro96}.  More recently, the master's thesis of L\"utgehetmann \cite[Chapter 4]{lutgehetmann} 
gives a partial description of these sheaves in the case where $X$ is one-dimensional.
\end{remark}

In contrast to the model for $\Conf(n, |X|)$ in Theorem \ref{theorem:global}, the model for $\Conf(n, C^{\circ} |X|)$ to be given in Theorem \ref{theorem:local} does not rely on a vertex ordering.  Nevertheless, we assume for notational convenience that the vertices are $\{1, \ldots, k\}$.

\begin{definition} \label{definition:posetofposets}
The \emph{poset of posets on $n$ labeled elements} $(\mathcal{P}(n), \preceq)$ has
$$
\mathcal{P}(n) = \{ \mbox{ poset structures on the set $\{1, \ldots, n\}$ } \}
$$
where $\preceq $ denotes inclusion of relations.
\end{definition}

\begin{remark}
Study of the poset $\mathcal{P}(n)$ has been undertaken by Serge Bouc in \cite{bouc2013} who computes its M\"obius function and the homotopy types of its intervals. 
\end{remark}

\begin{definition}
If $(S_1, \ldots, S_k) \in \mathcal{P}(n)^k$ is a $k$-tuple of posets, then the \textbf{support} of an element $i \in \{1, \ldots, n \}$ is defined
$$
\sigma_i(S_1, \ldots, S_k) = \{ v \in \{1, \ldots, k\} \mbox{ so that $i$ is not minimal in the poset $S_v$ } \}.
$$
\end{definition}

\begin{definition} \label{definition:local}
Define a poset $\mathcal{S}(n, X) \subseteq \mathcal{P}(n)^k$ whose elements are tuples $(S_1, \ldots, S_k)$ that satisfy
\begin{enumerate}
\item for every $i, j \in \{1, \ldots, n\}$, there is some $S_v$ in which $i$ and $j$ are comparable, and \label{item:related}
\item for every $i \in \{1, \ldots, n\}$, the support $\sigma_i(S_1, \ldots, S_k) \subseteq \{1, \ldots, k\}$ is a face of $X$. \label{item:support}
\end{enumerate}
The ordering comes from imposing $\preceq$ in every coordinate.
\end{definition}

\begin{theorem} \label{theorem:local}
Let $L(n, X)$ be the simplicial complex whose vertices are the minimal elements of the poset $\mathcal{S}(n, X)$ and 
whose faces are those collections that can be dominated by some element.
There is a homotopy equivalence $|L(n, X)| \simeq \Conf(n, C^{\circ}|X|)$ where $C^{\circ}|X|$ denotes the open cone on the realization of $X$.
\end{theorem}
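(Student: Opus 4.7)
My plan is to prove Theorem~\ref{theorem:local} by applying the nerve theorem to a good open cover of $\Conf(n, C^{\circ}|X|)$ whose nerve is precisely $L(n, X)$. Coordinatize the open cone as $C^{\circ}|X| = \{a = (a_1,\ldots,a_k) \in \R_{\geq 0}^k : \{v : a_v > 0\} \in X\}$. Given a configuration $c = (c_1,\ldots,c_n)$ with $c_i = (a_{i1},\ldots,a_{ik})$, define the \emph{induced tuple} $\bar{S}(c) = (S_1(c),\ldots,S_k(c))$ by declaring $i <_{S_v(c)} j$ iff $a_{iv} < a_{jv}$. A routine check confirms that each $S_v(c)$ is a genuine poset and that $\bar{S}(c) \in \mathcal{S}(n, X)$: condition~(\ref{item:related}) holds because distinct points differ in some coordinate, and condition~(\ref{item:support}) holds because $\sigma_i(\bar{S}(c)) \subseteq \{v : a_{iv} > 0\}$ and the latter is a face of $X$.

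For each $\bar{S} \in \mathcal{S}(n, X)$ set
$$U_{\bar{S}} = \{c \in \Conf(n, C^{\circ}|X|) : \bar{S} \preceq \bar{S}(c)\},$$
which is cut out by the strict inequalities $a_{iv} < a_{jv}$ for all relations $i <_{S_v} j$ in $\bar{S}$, hence open. The proof then requires three combinatorial facts. First, as $\bar{S}_0$ ranges over the minimal elements of $\mathcal{S}(n, X)$, the sets $U_{\bar{S}_0}$ cover $\Conf(n, C^{\circ}|X|)$, since the finite poset $\mathcal{S}(n, X)$ always has a minimal element below $\bar{S}(c)$. Second, the intersection $\bigcap_j U_{\bar{S}_0^{(j)}}$ equals $U_{\bar{S}}$ whenever $\bar{S}$ is the coordinate-wise join of the $\bar{S}_0^{(j)}$ in $\mathcal{S}(n, X)$, and is empty otherwise; this is because real strict inequalities are transitively closed. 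Third, the join $\bigvee_j \bar{S}_0^{(j)}$ exists in $\mathcal{S}(n, X)$ if and only if the collection $\{\bar{S}_0^{(j)}\}$ is dominated in $\mathcal{S}(n, X)$ (a larger element would have at least the same relations, hence at least as large $\sigma_i$, and face-ness is downward closed). Consequently the nerve of the cover matches $L(n, X)$ vertex for vertex and face for face.

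The central technical step is to show each $U_{\bar{S}}$ is contractible. I construct a canonical $c_{\bar{S}} \in U_{\bar{S}}$ by setting $a_{iv} = 0$ for $v \notin \sigma_i(\bar{S})$ and choosing strictly increasing positive values along a linear extension of the restricted poset $S_v|_{\{i : v \in \sigma_i(\bar{S})\}}$ for the remaining $v$; this is consistent because any $i <_{S_v} j$ forces $j$ to be non-minimal, so the "$i$ out, $j$ out" case never arises. The straight-line homotopy $c(t) = (1-t)c + t c_{\bar{S}}$ is then the candidate strong deformation retract of $U_{\bar{S}}$ onto $\{c_{\bar{S}}\}$. The main verification is that $c(t) \in \Conf(n, C^{\circ}|X|)$ for every $t \in [0,1]$, despite the fact that $C^{\circ}|X|$ is not a convex subset of $\R^k$: this reduces to checking that $\Supp(c_i(t)) = \Supp(c_i)$ for $t \in [0, 1)$ and $\Supp(c_i(1)) = \sigma_i(\bar{S})$, using the crucial inclusion $\sigma_i(\bar{S}) \subseteq \Supp(c_i)$ valid on $U_{\bar{S}}$ because $\bar{S} \preceq \bar{S}(c)$ implies $\sigma_i(\bar{S}) \subseteq \sigma_i(\bar{S}(c))$; distinctness of points and the defining inequalities of $U_{\bar{S}}$ survive convex combinations by the elementary fact that a convex combination of two strict inequalities of the same sign is strict.

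I expect this support-tracking to be the main obstacle, since it requires carefully unpacking the bridge between the combinatorial quantity $\sigma_i$ and the topological support of a point in the open cone, and then showing that the linear homotopy preserves membership in $\Conf(n, C^{\circ}|X|)$ cell by cell. Once the three combinatorial facts and the contractibility are in place, the nerve theorem for a good open cover of the paracompact space $\Conf(n, C^{\circ}|X|)$ delivers the homotopy equivalence $|L(n, X)| \simeq \Conf(n, C^{\circ}|X|)$.
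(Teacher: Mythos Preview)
Your proposal is correct and follows essentially the same strategy as the paper's proof: both build a good open cover of $\Conf(n, C^{\circ}|X|)$ indexed by $\mathcal{S}(n,X)$, verify that each $U_{\bar S}$ is star-shaped with center the ``minimal-support'' configuration you call $c_{\bar S}$, observe that intersections are again of the form $U_{\bar T}$ with $\bar T$ the coordinatewise join (or are empty), and invoke Borsuk's nerve lemma. Your write-up is in places more explicit than the paper's---notably the induced-tuple map $c\mapsto \bar S(c)$ and the check that a nonempty intersection forces the join to lie in $\mathcal{S}(n,X)$ via $\sigma_i(\bar T)\subseteq\sigma_i(\bar S(c))$---but there is no substantive difference in method.
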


\begin{remark}
Although Theorem \ref{theorem:local} is stated and proved for the open cone $C^{\circ}|X|$, it is possible to find a homotopy equivalence $\Conf(n, C^{\circ}|X|) \simeq \Conf(n, C|X|)$ to configuration space in a closed cone.  Consequently,
$$
|L(n, X)| \simeq \Conf(n, C|X|).
$$
\end{remark}

\begin{example}[Two points in the open cone on a closed interval]
Let $X$ have vertex set $\{1, 2\}$ and a single facet $12$.  Since there are three poset structures on the set $\{1, 2\}$, there are nine elements in the poset $\mathcal{P}(2)^2$.  Eight of these nine have the property that every pair $i, j \in \{1, 2\}$ is related in one of the two posets; the one poset pair without this property is the pair where both posets are discrete.  There are four minimal poset pairs, and these are the vertices of $L(2, X)$:
$$
(\; _{1 \; 2}\;, \; _{1}^2 \;) \;\;\;\; (\; _{1 \; 2}\;, \; _{2}^1 \;) \;\;\;\; ( \; _{1}^2 \;, \; _{1 \; 2}\;)\;\;\;\; ( \; _{2}^1 \;, \; _{1 \; 2}\;).
$$
There are also four facets, all one-dimensional, corresponding to the poset pairs
$$
(\; \; _{1}^2\;, \; _{1}^2 \;) \;\;\;\; (\; \; _{2}^1\;, \; _{1}^2 \;) \;\;\;\; (\; \; _{1}^2\;, \; _{2}^1 \;) \;\;\;\; (\; \; _{2}^1\;, \; _{2}^1 \;),
$$
\begin{figure}
\begin{centering}
\includegraphics{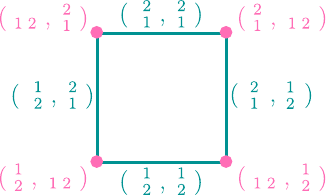}
\end{centering}
\caption{The simplicial complex $L(2, X)$ where $X$ is a $1$-simplex.}
\end{figure}
\noindent
so this computation squares with $\Conf(2, C^{\circ}|X|) \simeq S^1$.
\end{example}

\begin{remark}
The local model is reminiscent of classical work of Fox and Neuwirth describing configuration space in the plane \cite{FoxNeuwirth}.  Roughly speaking, they build their cells out of the three expressions $(x_1 < x_2), \; (x_1 = x_2), \; (x_1 > x_2)$.  Since the middle relation is closed while the other two are open, they obtain a partition of configuration space into locally closed subsets.
Our model does away with the equality relation, lending much greater flexibility.  The price is that we do not partition configuration space; instead, we obtain a model that is homotopy equivalent.
\end{remark}

\begin{remark}
Recent work of Tosteson \cite{Tosteson16} shows that, in many cases, the cohomology of configuration space is representation stable in the sense of \cite{ChurchEllenbergFarb15}.  In particular, the dimension of cohomology over any field is eventually polynomial by \cite{ChurchEllenbergFarbNagpal14}.  It is natural to ask, therefore, if configuration space admits combinatorial models with only polynomially-many cells in each dimension.
\end{remark}

\begin{remark}
Configuration spaces of pairs (i.e., the case $n=2$) is a classical topic.  These spaces are called deleted products, and have applications in embedding theory; see \cite{Hu60}, \cite{Patty61}, \cite{Whitley71}, \cite{Skopenkov02}, for example.
The first results on configuration space in a simplicial complex for $n > 2$ are due to Gal \cite{Gal01} who computes the Euler characteristic of $\Conf(n, |X|)$.  The homotopy groups of certain generalized deleted products of simplicial complexes have been considered in \cite{KS16}.
\end{remark}

\section{Braid groups}
\noindent
The fundamental group $P_n(Z) = \pi_1 \Conf(n, Z)$ is called the \textbf{pure braid group on $n$ strands in $Z$} and sits inside $B_n(Z) = \pi_1 (\Conf(n, Z)/S_n)$, the \textbf{braid group on $n$ strands in $Z$}.  These groups are related by the short exact sequence
$$
1 \longrightarrow P_n(Z) \longrightarrow B_n(Z) \longrightarrow S_n \longrightarrow 1.
$$
The now-common interpretation of these fundamental groups as braid groups goes back to the paper of Fox and Neuwirth \cite{FoxNeuwirth}.

Theorems \ref{theorem:global} and \ref{theorem:local} give presentations for the fundamental groupoids of configuration spaces, which makes algorithmic the computation of braid groups, both $B_n$ and $P_n$.

\begin{remark}
The computation of $B_n$ is already algorithmic by results of An-Park \cite{AP17}, where a description of these groups is obtained in terms of decompositions into simpler spaces.
\end{remark}

If $Z$ is a smooth manifold, then the study of $P_n(Z)$ usually begins with a theorem of Fadell-Neuwirth \cite{FadellNeuwirth62} stating that the forgetful maps $\Conf(n+1, Z) \to \Conf(n, Z)$ are fibrations with fiber $Z - \{ \mbox{$n$ points} \}$.  
However, if $Z$ has singularities, then these forgetful maps are not fibrations, since the homotopy type of $Z - \{ \mbox{$n$ points} \}$ depends on which points are removed. 
Even the case of $n=2$ and $\dim Z = 1$ is interesting and subtle; see \cite{BarnettFarber09}.   More generally, configuration spaces of graphs attract ongoing research interest.  We contribute the following example.

\begin{example} \label{example:k42}
Let $Z$ be the complete bipartite graph $K_{4,2}$, and let $\Sigma_g$ be the smooth surface of genus $g$.  In this and all subsequent examples, we use Sage \cite{sage}---and therefore, indirectly, the group theory program \cite{gap}---to compute braid groups:
\begin{align*}
P_3(Z) &= \pi_1 \, \Sigma_{13} \\
B_3(Z) &= \pi_1 \, \Sigma_{3}.
\end{align*}
By a result of Ghrist \cite{Ghrist01}, the configuration space of any graph  besides the circle and the interval is a $K(\pi, 1)$, and so we actually have $\Conf(3, Z) \simeq \Sigma_{13}$.  
Abrams gave a similar result for two points in $K_5$ and the complete bipartite graph $K_{3,3}$; these spaces are homotopy equivalent to $\Sigma_6$ and $\Sigma_4$ respectively \cite{abrams_thesis}.  That these are the only two-strand braid groups to give a surface group was shown by Ummel \cite{Ummel72}.
\end{example}

\begin{remark}
Our method of computing the quotient $\Conf(n, Z)/S_n$ is to subdivide the model $C(n,X)$ so that pairs of vertices in the same $S_n$-orbit do not share a neighboring vertex.  This endows the set of orbits $\{ \mbox{vertices} \}/ S_n$ with the structure of an abstract simplicial complex whose realization is the topological quotient. 
\end{remark}

The next examples provide what seem to be the first computations of three-strand pure braid groups in two-dimensional singular spaces.

\begin{example}[Braids near the singular point of a nodal curve] \label{example:node}
Let $Z$ be the affine hypersurface $Z = \{(z_1, z_2) \in \mathbb{C}^2\; : \; z_1 z_2 = 0 \}$.  We compute
\begin{align*}
P_2(Z) &= \mathbb{Z} \\
B_2(Z) &= (\mathbb{Z}/2\mathbb{Z}) \ast  (\mathbb{Z}/2\mathbb{Z}) \\
P_3(Z) &= \mathbb{Z} \ast \mathbb{Z} \ast \mathbb{Z} \ast \mathbb{Z} \ast \mathbb{Z} \\
B_3(Z) &= \langle a, b, c, d \mid a^2, b^3, (ab)^2, c^2, d^3, (cd)^2 \rangle.
\end{align*}
Further computing
\begin{align*}
H_2(\Conf(2, Z)\,;\; \mathbb{Q}) &= \mathbb{Q}^2 \\
H_2(\Conf(3, Z)\,; \; \mathbb{Q}) &= \mathbb{Q}^{18},
\end{align*}
we see that neither $\Conf(2, Z)$ nor $\Conf(3, Z)$ is an Eilenberg-MacLane space, since free groups have no homology beyond dimension one.  This is perhaps surprising in light of the result of Ghrist that $\Conf(n, G)$ is aspherical for $G$ a graph, and similarly for $\Conf(n, \mathbb{R}^n)$ by a result of Fadell-Neuwirth \cite{FadellNeuwirth62}.
\end{example}

\begin{example}[Braids in the nodal cubic] \label{example:degenerate_elliptic}
Let $Z$ be the Riemann sphere with two points identified; this is a topological description of the degenerate elliptic curve $y^2 z = x^3 + x^2 z$ advertized in the abstract.  We compute
\begin{align*}
P_2(Z) &= \mathbb{Z} \ast \mathbb{Z} \\
B_2(Z) &= \mathbb{Z} \ast (\mathbb{Z}/2\mathbb{Z}).
\end{align*}
Our computational methods have not been powerful enough to compute $B_3(Z)$ directly.  However, $Z$ is an elementary space in the sense of An-Park, and so $B_3(Z) \simeq \mathbb{Z}^3 \rtimes S_3$ by \cite[Lemma 4.4]{AP17}.
We give a guess for $P_3$ in Conjecture \ref{conjecture:nodal_cubic}.
\end{example}

\begin{example}[Braids in the suspension of $S^1 \sqcup \ast$] \label{example:sphere_plus_edge}
Let $Z$ be the subset of $\mathbb{R}^3$ given by the union of the unit sphere and a line segment connecting its poles:
$$
Z = \{ (x, y, z) \; \mid \; x^2 + y^2 + z^2 = 1 \} \cup \{ (0, 0, t) \; \mid \; -1 \leq t \leq 1 \}.
$$ 
We compute
\begin{align*}
P_2(Z) &= \mathbb{Z} \ast \mathbb{Z} \\
B_2(Z) &= \mathbb{Z} \ast (\mathbb{Z}/2\mathbb{Z}) \\
P_3(Z) &= \mathbb{Z} \ast \mathbb{Z} \ast \mathbb{Z} \\
B_3(Z) &= \langle a, b, c \mid a^2, b^3, (ab)^2, acac^{-1} \rangle.
\end{align*}
We note that this computation of $B_3(Z)$ matches the semidirect product computed by An-Park mentioned in Example \ref{example:degenerate_elliptic}; the elements $a,b$ generate a symmetric group, and the final relation indicates that the transposition $a$ should act trivially on the copy of $\mathbb{Z}$ generated by $c$.

Let $Z'$ be the same space but removing a small open neighborhood of the equator.  Then $P_3(Z') = \mathbb{Z} \ast \mathbb{Z} \ast \mathbb{Z} \ast \mathbb{Z} \ast \mathbb{Z}$, matching a similar group in Example \ref{example:node}.
\end{example}

Although we have been so-far unable to compute the three-strand pure braid group in the nodal cubic from Example \ref{example:degenerate_elliptic}, we make the following conjecture on the strength of Example \ref{example:sphere_plus_edge}.

\begin{conjecture} \label{conjecture:nodal_cubic}
The three-strand pure braid group $P_3$ for the nodal cubic coincides with the three-strand pure braid group for the space from Example~\ref{example:sphere_plus_edge}. 
\end{conjecture}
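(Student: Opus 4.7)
The plan is to prove $P_3(Z_1) \cong P_3(Z_2)$ and $B_3(Z_1) \cong B_3(Z_2)$, where $Z_1$ is the nodal cubic and $Z_2$ is the sphere-with-segment of Example~\ref{example:sphere_plus_edge}, by combining a topological comparison with the combinatorial models of Theorems~\ref{theorem:global} and~\ref{theorem:local}.

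Topologically, there is a collapse map $\phi\colon Z_2 \to Z_1$ sending the segment to the nodal point. This is a homotopy equivalence of ambient spaces but does not descend to configuration spaces, since multiple configuration points on the segment get identified. My approach is to restrict attention to the open subspace $U \subseteq \Conf(3, Z_2)$ consisting of configurations in which at most one of the three points lies on the segment, so that $\phi$ restricts to a well-defined map $\widetilde{\phi}\colon U \to \Conf(3, Z_1)$. A ``slide off the segment'' deformation retraction, using that the endpoints of the segment lie on the sphere, should show that $U \hookrightarrow \Conf(3, Z_2)$ is a homotopy equivalence. The crux is then to prove that $\widetilde{\phi}$ is at least a $\pi_1$-equivalence (and, for the unordered statement, equivariantly so). For this, I would invoke Theorem~\ref{theorem:local} and compare the local configuration spaces at the node of $Z_1$, whose link is $S^1 \sqcup S^1$, to those near the segment of $Z_2$, whose links are $S^0$ in the interior and $S^1 \sqcup \{\text{pt}\}$ at the endpoints; an explicit matching of the $L(3, \cdot)$ complexes should provide the desired local comparison.

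The hard part will be showing that $\widetilde{\phi}$ induces an isomorphism on $\pi_1$. Since configuration spaces are not homotopy functors on ambient spaces, the equivalence $\phi\colon Z_2 \simeq Z_1$ gives no a priori control, and one must rule out any ``hidden'' braiding at the node that lacks a counterpart along the segment. A pragmatic backup plan is fully computational: choose a minimal triangulation $X_1$ of $Z_1$, apply Theorem~\ref{theorem:global} to obtain $C(3, X_1)$, then reduce its $2$-skeleton using discrete Morse theory to a tractable complex and feed the resulting presentation into \cite{gap} to compare directly with the presentations from Example~\ref{example:sphere_plus_edge}. This is in the same spirit as the other examples in the paper, but the size of $C(3, X_1)$ has so far obstructed the direct approach; a Morse-theoretic collapse, perhaps guided by an $S_3$-equivariant matching, seems the most promising way to make the computation tractable.
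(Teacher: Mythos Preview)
The paper does not prove this statement: it is explicitly labeled a \emph{Conjecture}, and the text immediately preceding it says the authors have been ``so-far unable to compute the three-strand braid group in the nodal cubic.''  There is therefore no proof in the paper to compare your proposal against.

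Your proposal is likewise not a proof but an outline with gaps you yourself flag.  Two points are worth naming concretely.  First, the ``slide off the segment'' deformation retraction of $\Conf(3, Z_2)$ onto $U$ is not obviously well-defined: if two points lie on the segment and a third sits on the sphere near one of the poles, sliding a segment point toward that pole risks a collision, so the retraction cannot be a simple coordinatewise homotopy and would need a genuine argument.  Second, and more seriously, the step you call ``the hard part''---that $\widetilde{\phi}$ induces an isomorphism on $\pi_1$---is exactly the content of the conjecture, and nothing in your sketch reduces it.  Comparing the local models $L(3,\cdot)$ at the node (link $S^1 \sqcup S^1$) and along the segment (links $S^0$ and $S^1 \sqcup \ast$) does not by itself control global $\pi_1$; one would need something like a van Kampen or Seifert--van Kampen patching over a suitable open cover, together with a check that the local $\pi_1$'s and the gluing data match, and you have not set this up.

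Your backup computational plan---build $C(3,X_1)$ for a triangulation $X_1$ of the nodal cubic and simplify---is precisely what the paper attempted; the authors report that the direct computation was out of reach, which is why the statement is recorded as a conjecture rather than an example.  A discrete-Morse reduction is a reasonable idea, but as written it is a hope, not an argument.
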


\section{Acknowledgements}
Thanks to Melody Chan, Fred Cohen, Jordan Ellenberg, Mark Goresky, Greg Malen, Sam Payne, Jos\'e Samper, and Phil Tosteson for valuable conversations, the staff and organizers at ICERM for the stimulating program ``Topology in Motion'' at which this work began, and MathOverflow user Gabriel C. Drummond-Cole for answering a question related to Example~\ref{example:k42} in MO:269017.

This research was supported by the NSF through the Algebra RTG at the University of Wisconsin, DMS-1502553.

\section{Combinatorial deletion of a subcomplex}
In what follows, let $A \subseteq X$ be a subcomplex of the simplicial complex $X$.  Topologically, subtracting $|X| - |A|$ makes perfect sense since $|A| \subseteq |X|$.  However, the combinatorics of such a subtraction cannot be as straightforward as removing the faces of $A$ from $X$, since this completely ruins the simplicial complex structure.  We put forth the following remedy, which seems not to appear in the literature despite its simplicity.
\begin{definition}
The \textbf{simplicial difference} $X \ominus A$ is the simplicial complex whose vertices are the faces of $X$ that are minimal nonfaces of $A$ and where a collection of such minimal nonfaces $\{ \sigma_0, \ldots, \sigma_d \}$ is a face of $X \ominus A$ whenever the union $\cup \sigma_i$ is a face of $X$.
\end{definition}

\begin{lemma} \label{lemma:delete}
If $X$ is an abstract simplicial complex and $A \subseteq X$ is a subcomplex, then there is a homotopy equivalence $|X \ominus A| \simeq |X| - |A|$.
\end{lemma}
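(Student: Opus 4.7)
The plan is to construct a good open cover of $|X|-|A|$ whose nerve is the complex $X \ominus A$, and to conclude via the nerve lemma.

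For each minimal nonface $\sigma$ of $A$ (a face of $X$ not in $A$, all of whose proper subfaces lie in $A$), I set $U_\sigma \subseteq |X|$ to be the open star of $\sigma$, i.e.\ the union of the open simplex interiors $\mathrm{int}(\tau)$ taken over all $\tau \in X$ with $\tau \supseteq \sigma$. Two easy verifications come first: (a) $U_\sigma \subseteq |X|-|A|$, because $A$ is a subcomplex and hence closed under passage to subfaces, so $\tau \supseteq \sigma \notin A$ forces $\tau \notin A$; and (b) the family $\{U_\sigma\}$ covers $|X|-|A|$, because any point of $|X|-|A|$ lies in the interior of some face $\tau \notin A$, and a straightforward induction on $\dim \tau$ produces at least one minimal nonface of $A$ contained in $\tau$.

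Next I identify the nerve. A collection $\{\sigma_0, \ldots, \sigma_d\}$ of minimal nonfaces has $\bigcap_i U_{\sigma_i} \neq \emptyset$ if and only if some face of $X$ contains the union $\bigcup_i \sigma_i$, which (because $X$ is closed under subfaces) is equivalent to $\bigcup_i \sigma_i \in X$. This is exactly the condition defining a face of $X \ominus A$, so the nerve is $X \ominus A$. When the intersection is nonempty it equals the open star of the face $\bigcup_i \sigma_i$, and I will show this is contractible via the straight-line homotopy $H(p,t) = (1-t)p + tb$ to the barycenter $b$ of $\bigcup_i \sigma_i$: since the support of $b$ is contained in the support of $p$, the homotopy keeps $p$ inside the same closed simplex of $X$ and preserves positivity of the $\sigma_i$-barycentric coordinates, so the image stays in the open star throughout.

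With the cover shown to be good (all nonempty finite intersections contractible), the nerve lemma gives $|X \ominus A| \simeq |X|-|A|$. The main technical obstacle I anticipate is the contractibility of intersections, since the classical nerve-type arguments for simplicial complexes are usually phrased for open stars of \emph{vertices}; the deformation-retraction argument above handles open stars of higher-dimensional faces uniformly. Paracompactness of the open subset $|X|-|A|$ is automatic when $X$ is finite, and in general follows from standard CW-topology results, so the nerve lemma applies without further complication.
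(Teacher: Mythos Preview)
Your proof is correct and follows essentially the same route as the paper: both define the open-star sets $U_\sigma=\{\alpha\in|X|:\alpha_i>0\text{ for }i\in\sigma\}$ for minimal nonfaces $\sigma$ of $A$, verify they cover $|X|-|A|$, show each nonempty intersection is the open star of the union $\bigcup_i\sigma_i$ and is star-shaped with center its barycenter, and then invoke Borsuk's nerve lemma. The only cosmetic difference is that the paper phrases contractibility as ``star-shaped'' while you write out the straight-line homotopy explicitly.
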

\begin{proof}
Suppose the vertices of $X$ are $\{1, \ldots, k\}$.  One standard definition of the geometric realization of $X$ is
\begin{multline*}
|X| = \\
\{  (\alpha_1, \ldots, \alpha_k) \in \mathbb{R}^k \mbox{ so that $\alpha_i \geq 0$, $\sum_i \alpha_i = 1$, and $\Supp(\alpha)$ is a face of $X$} \},
\end{multline*}
where $\Supp(\alpha) = \{\mbox{ $i$ so that $\alpha_i > 0$ } \}$.  If $\sigma \subset X$ is a nonempty face of $X$, then define an open subset of $|X|$
$$
U_X(\sigma) = \{ (\alpha_1, \ldots, \alpha_k) \in |X| \mbox{ so that $\alpha_i >0$ for $i \in \sigma$ } \}.
$$
The set $U_X(\sigma)$ is nonempty because we may take $\alpha_i = 1/(d + 1)$ for $i \in \sigma$ and $\alpha_i = 0$ when $i \not \in \sigma$.  Call this tuple $b_{\sigma}$ since it is the barycenter of the realization of $\sigma$.  The property of $b_{\sigma}$ that we will need is that $\Supp(b_{\sigma}) \subseteq \Supp(u)$ for any other $u \in U_X(\sigma)$.
We shall see that $U_X(\sigma)$ is contractible---indeed, we now show that it is star-shaped with star point $b_{\sigma}$.

We must show that, given any other point $u \in U_X(\sigma)$ and $\gamma \in [0, 1]$, the linear combination $\gamma u + (1-\gamma) b_{\sigma}$ lies within $U_X(\sigma)$.  This amounts to checking that $\Supp(\gamma u + (1-\gamma) b_{\sigma})$ is a face of $X$.  However, it is immediate that
$$
\Supp(\gamma u + (1-\gamma) b_{\sigma}) = \Supp(u)
$$
for any $\gamma > 0$ since $\Supp(b_{\sigma}) \subseteq \Supp(u)$; and $\Supp(u)$ is certainly a face of $X$ since $u \in |X|$.

We now use some of these open sets $U_X(\sigma)$ to build a good cover of $|X|-|A|$; the result will then follow by Borsuk's nerve lemma.  Note that a point $\alpha \in |X|$ lies in $|X|-|A|$ exactly when $\Supp(\alpha)$ is not a face of $A$.  Since any such nonface must contain a minimal nonface, we see that the open sets $U_X(\sigma)$ cover $X$ as $\sigma$ ranges over the vertices of $X \ominus A$.  To see that this cover is good, we must check that every intersection of these opens is either empty or contractible.  Such an intersection
$$
U_X(\sigma_1) \cap \cdots \cap U_X(\sigma_l)
$$
is empty when the union $\cup \sigma_i$ is not a face of $X$, and otherwise equals $U_X(\cup \sigma_i)$.  Since this last set is already known to be contractible, we are done since the resulting combinatorics matches the definition of $X \ominus A$.
\end{proof}

\begin{remark}
A result of Munkres \cite[Lemma 70.1]{Mun84} gives a different combinatorial model for $|X| - |A|$ in the event that $A$ is a full subcomplex of $X$.  (Recall that $A \subseteq X$ is a full subcomplex if every face of $X$ whose vertices are in $A$ is also a face of $A$.)  The model is simple: take the full subcomplex generated by vertices not lying in $A$.

Any subcomplex can be made full by sufficient subdivision.  However, the resulting model is distinct from the the model we have described, and usually much larger.  For example, let $X=\{123\}$ be a 2-simplex, and let $A=\{12,23,13\}$ be its boundary.  The simplicial difference $X \ominus A$ has one vertex $\{(123)\}$, corresponding to the minimal nonface of $A$, and no edges.  We now consider the model of Munkres.  The most efficient subdivision for which $A$ is full is the barycentric subdivision of $X$, which has seven vertices.  Subtracting the three vertices of $A$, we obtain a model for $|X|-|A|$ with four vertices and three edges.
\end{remark}

\section{Global configuration space}
In this section, we provide background and a proof for Theorem \ref{theorem:global}.  First, we will build a standard triangulation of the product $|X|^n$ so that the ``fat diagonal'' of illegal configurations is a subcomplex.  Then, we will apply Lemma \ref{lemma:delete} to remove the fat diagonal.

Products are more convenient in the context of simplicial sets instead of simplicial complexes because the product of two simplicial sets comes with a natural triangulation.
The extra structure required to upgrade a simplicial complex to a simplicial set is a partial ordering on the vertices that restricts to a total order on every face.

\begin{definition}
If $X$ is a simplicial complex with vertex set $\{1, \ldots, k\}$, and if the vertices carry a partial order that restricts to a total order on every face, then define the \textbf{simplicial set associated to $X$} which is a functor
$$
X_{\bullet} \colon \Delta^{op} \to \mathbf{Set}
$$
so that $X_n$ is the set of weakly increasing chains of vertices $v_0 \leq \ldots \leq v_n$ with the property that $\{v_0, \ldots, v_n \}$ is a face of $X$.  Such a chain pulls back along a weakly monotone function $f \colon \{0, \ldots, m\} \to \{0, \ldots, n\}$ using the rule
$$
v_0 \leq \ldots \leq v_n \;\;\;\;\; \mapsto \;\;\;\;\; v_{f(0)} \leq \ldots \leq v_{f(m)}.
$$
\end{definition}

We recall several basic facts about simplicial sets; see \cite[p. 538]{Hatcher}, for example.  Writing $\| - \| \colon \mathbf{sSet} \to \mathbf{Top}$ for the realization functor on simplicial sets, we have $|X| \cong \| X_{\bullet} \|$.  Also, as we have mentioned, we have a homeomorphism $\|S_{\bullet} \times T_{\bullet} \| \cong \|S_{\bullet} \| \times \|T_{\bullet} \|$ for any pair of simplicial sets $S_{\bullet}, T_{\bullet}$.  Finally, we note that the degreewise diagonal map $X_{\bullet} \hookrightarrow X_{\bullet} \times X_{\bullet}$ and projection maps $X_{\bullet} \times X_{\bullet} \to X_{\bullet}$ realize to their topological counterparts $\| X_{\bullet} \| \hookrightarrow \| X_{\bullet} \| \times \| X_{\bullet} \|$ and $\|X_{\bullet}\| \times \|X_{\bullet}\| \to \|X_{\bullet}\|$.
\begin{lemma}[Triangulation of a product {\cite[\S 3.B]{Hatcher} or \cite[p. 68]{EilenbergSteenrod1952}}] \label{lemma:triangulation}
Suppose that $X$ and $Y$ are simplicial complexes, and that each is equipped with a partial order on its vertices that restricts to a total order on each of its faces.  Write $X \times Y$ for the simplicial complex whose vertices are pairs of vertices $(x, y)$ with $x \in X, y \in Y$ and where the faces are of the form $\{ \, (x_1, y_1), \ldots, (x_d, y_d) \, \}$ with $x_1 \leq \cdots \leq x_d$ and $y_1 \leq \cdots \leq y_d$.  Then
$$
|X \times Y| \cong |X| \times |Y|.
$$
\end{lemma}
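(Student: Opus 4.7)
The plan is to reduce the statement to the two facts about simplicial sets recalled just before the lemma: that $|X| \cong \|X_\bullet\|$ for any ordered simplicial complex, and that realization of simplicial sets preserves finite products. The bridge between them is an isomorphism of simplicial sets $(X \times Y)_\bullet \cong X_\bullet \times Y_\bullet$, where the product on the right is the categorical (degreewise) product of simplicial sets and the left-hand side is formed with respect to the product partial order on the vertex set of $X \times Y$.

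First I would verify that the product partial order on vertex pairs $(x, y)$, defined by $(x_1, y_1) \leq (x_2, y_2)$ iff $x_1 \leq x_2$ and $y_1 \leq y_2$, restricts to a total order on every face of $X \times Y$. This is immediate from the definition of a face: given a face $\{(x_1, y_1), \ldots, (x_d, y_d)\}$ with both $x_1 \leq \cdots \leq x_d$ and $y_1 \leq \cdots \leq y_d$, any two elements are directly comparable in the product order. Consequently the simplicial set $(X \times Y)_\bullet$ associated to the ordered simplicial complex $X \times Y$ is well defined.

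Next I would write down the map of simplicial sets $(X \times Y)_\bullet \to X_\bullet \times Y_\bullet$ that sends a chain $(x_0, y_0) \leq \cdots \leq (x_n, y_n)$ to the pair of coordinate chains $(x_0 \leq \cdots \leq x_n, \; y_0 \leq \cdots \leq y_n)$. The key observation is that the face condition on both sides matches exactly: on the left one requires the distinct pairs $(x_i, y_i)$ to constitute a face of $X \times Y$, which by definition is equivalent to demanding that the distinct $x_i$ form a face of $X$ and the distinct $y_i$ form a face of $Y$; on the right these are precisely the two conditions. The inverse map is just pairing, and simpliciality (compatibility with the weakly monotone reindexing maps from $\Delta^{\op}$) holds because the face and degeneracy maps act coordinatewise in both descriptions. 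This yields an isomorphism of simplicial sets.

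Combining these ingredients gives a chain of homeomorphisms
$$
|X \times Y| \;\cong\; \|(X \times Y)_\bullet\| \;\cong\; \|X_\bullet \times Y_\bullet\| \;\cong\; \|X_\bullet\| \times \|Y_\bullet\| \;\cong\; |X| \times |Y|.
$$
I expect no substantive obstacle; the only delicate point is the bookkeeping needed to see that the face condition in $X \times Y$ really is the conjunction of the face conditions in $X$ and $Y$ applied to the respective coordinate chains, including the case where consecutive pairs share one coordinate but not the other. Once that identification is confirmed, everything else is functorial.
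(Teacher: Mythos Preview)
Your proposal is correct and follows exactly the same route as the paper's proof, which is the one-line chain $|X \times Y| \cong \|(X \times Y)_\bullet\| \cong \|X_\bullet \times Y_\bullet\| \cong \|X_\bullet\| \times \|Y_\bullet\| \cong |X| \times |Y|$. You have simply supplied the details the paper leaves implicit, in particular the verification that $(X \times Y)_\bullet \cong X_\bullet \times Y_\bullet$ as simplicial sets.
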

\begin{proof}
In brief, we have $|X \times Y| \cong \| (X \times Y)_{\bullet} \| \cong \| X_{\bullet} \times Y_{\bullet} \| \cong \| X_{\bullet} \| \times \| Y_{\bullet} \| \cong |X| \times |Y|$.
\end{proof}

\begin{proof}[Proof of Theorem \ref{theorem:global}]
Triangulating $|X|^n$ in the standard way described in Lemma \ref{lemma:triangulation}, we obtain a simplicial complex $X^n$ with a homeomorphism $|X^n| \cong |X|^n$.  In detail, the vertices of $X^n$ are $n$-tuples of vertices of $X$, and a collection of $n$-tuples forms a face if they can be arranged to be weakly increasing in every coordinate.  Writing each $n$-tuple as a column vector, the faces are matrices with $n$ rows where each row is weakly increasing.

The fat diagonal $F \subseteq X^n$ is the subcomplex whose faces are matrices that have a repeated row, so a matrix is a face of $F$ if it is almost a conf matrix but fails condition (\ref{item:distinct}).  By Lemma \ref{lemma:delete} we have a homotopy equivalence 
$$
|X^n \ominus F| \simeq |X^n| - |F| \cong |X|^n - |F| = \Conf(n, |X|).
$$
Not by coincidence, the definition of $C(n, X)$ matches that of $X^n \ominus F$, and we are done.
\end{proof}

\section{Local configuration space}
\noindent
Define the \textbf{cone realization} of a simplicial complex with vertices $\{1, \ldots, k\}$
\begin{multline*}
|X|_{cone} = \\
\{ \mbox{ $(\alpha_1, \ldots, \alpha_k) \in \mathbb{R}^k$ so that $\alpha_i \geq 0$ for all $i$ and $\Supp(\alpha)$ is a face of $X$ } \},
\end{multline*}
where $\Supp(\alpha) = \{ \mbox{ $i$ so that $\alpha_i > 0$ } \}$ records the positions of the positive entries of $\alpha$.  The usual realization $|X| \subset |X|_{cone}$ is the intersection of the cone realization with the hyperplane $\sum_i \alpha_i = 1$.   We have a homeomorphism
$$
|X|_{cone} \cong C^{\circ} |X|
$$
to the open cone on $|X|$.
The $n$-fold product $(|X|_{cone})^n$ sits inside the matrix space $\Mat_{\mathbb{R}}(n \times k)$ where each row gives a $k$-tuple that lives in $|X|_{cone}$:
\begin{multline*}
(|X|_{cone})^n \simeq \\
\left\{ \parbox{28em}{ real matrices $M \in \Mat_{\mathbb{R}}(n \times k)$ so that $M_{ri} \geq 0$ and for each $r \in \{1, \ldots n\}$, the $r^{\tiny \mbox{th}}$ row $(M_{r1}, M_{r2}, \ldots, M_{rk})$ lies in $|X|_{cone}$ } \right\}.
\end{multline*}
Such a matrix $M$ gives a point of $\Conf_n(|X|_{cone})$ when its rows are distinct:
\begin{multline*}
\Conf_n(|X|_{cone}) \simeq \\
\left\{ \parbox{29em}{ real matrices $M \in (|X|_{cone})^n \subseteq \Mat_{\mathbb{R}}(n \times k)$ having distinct rows  } \right\}.
\end{multline*}
For any $i \in \{1, \ldots, k\}$ and partial order $P$ on the set $\{1, \ldots, n\}$, we define an open subset of $(|X|_{cone})^n$ by asking that column $i$ of the matrix $M$ obey the strict inequalities present in the partial order $P$:
$$
U_P^i = \left\{ \parbox{21em}{ real matrices $M \in (|X|_{cone})^n$ so that for any $a, b \in \{1, \ldots, n\}$ we have $a <_P b \implies M_{ai} < M_{bi}$ } \right\}.
$$
If no two points $a, b \in \{1, \ldots, n\}$ are comparable in $P$, then the open set $U_P^i$ imposes no condition.  At the other extreme, if $P$ is a total order, then the open set $U_P^i$ consists of those matrices for which the real numbers in column $i$ are sorted according to $P$.

\begin{lemma} \label{lemma:contractible}
If $S = (S_1, \ldots, S_k)$ is a $k$-tuple of partial orders on the set $\{1, \ldots, n\}$, then the intersection
$$
U_S = U_{S_1}^1 \cap \cdots \cap U_{S_k}^k
$$
is either contractible or empty.
\end{lemma}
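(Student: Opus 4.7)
The plan is to show that $U_S$ is star-shaped around a carefully chosen basepoint $M^* \in \Mat_{\mathbb{R}}(n \times k)$ whose $(r,i)$-entry is the height of $r$ in the poset $S_i$, that is, the length of the longest chain in $S_i$ terminating at $r$ (so minimal elements have height $0$). The two naive contractions both fail here: the radial homotopy $M \mapsto (1-t)M$ collapses the strict column inequalities at $t = 1$, and the straight-line homotopy toward an arbitrary point enlarges row supports to their union, which need not be a face of $X$ (the same obstacle already encountered in the proof of Lemma \ref{lemma:delete}). The height basepoint resolves both issues at once.

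First I would record the basic properties of $M^*$. For columns: if $a <_{S_i} b$, any chain terminating at $a$ extends by appending $b$, so $h_{S_i}(a) < h_{S_i}(b)$; hence $M^*$ satisfies every column inequality strictly. For rows: $\Supp(M^*_r) = \{i : r \text{ is not minimal in } S_i\} = \sigma_r(S)$. Next, assuming $U_S \neq \emptyset$, I would pick any $M \in U_S$ and observe that whenever $i \in \sigma_r(S)$, there exists some $a <_{S_i} r$ with $M_{ai} < M_{ri}$, forcing $M_{ri} > 0$. This yields the key containment $\sigma_r(S) \subseteq \Supp(M_r)$, and since $\Supp(M_r)$ is a face of $X$, so is its subset $\sigma_r(S)$. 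Thus each row of $M^*$ lies in $|X|_{cone}$, and combined with the column conditions this gives $M^* \in U_S$.

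Finally I would exhibit the straight-line homotopy $M_t = (1-t)M + tM^*$ as a path in $U_S$ for every $M \in U_S$. Columns remain strictly ordered at each $t$ because a convex combination of two strict inequalities with nonnegative weights summing to $1$ is strict. For rows, non-negativity is preserved, and for $t \in (0,1)$ we have $\Supp(M_{t,r}) = \Supp(M_r) \cup \Supp(M^*_r) = \Supp(M_r) \cup \sigma_r(S) = \Supp(M_r)$ by the key containment; at the endpoints the row support is $\Supp(M_r)$ or $\sigma_r(S)$, both faces of $X$. Hence $M_t \in U_S$ throughout, and $U_S$ deformation retracts to $\{M^*\}$.

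The main obstacle is not computational but conceptual: choosing $M^*$ so that its row supports are as small as the column inequalities permit, namely $\sigma_r(S)$. A larger choice of support would risk leaving $|X|_{cone}$ along the homotopy, while a smaller choice would violate some strict column inequality. The height function furnishes exactly the minimal assignment compatible with the column constraints, and the containment $\sigma_r(S) \subseteq \Supp(M_r)$ is precisely what promotes this minimality to a star-shaped contraction.
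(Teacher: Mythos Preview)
Your proof is correct and follows essentially the same route as the paper: both arguments show $U_S$ is star-shaped around a matrix $M^{\star}$ whose row supports are exactly $\sigma_r(S)$, using the containment $\sigma_r(S)\subseteq \Supp(M_r)$ to keep the straight-line homotopy inside $(|X|_{cone})^n$. The only cosmetic difference is that you take $M^{\star}_{ri}=h_{S_i}(r)$ via the height function, while the paper builds $M^{\star}$ by extending each $S_i|_{\eta_i}$ to a total order and embedding it in $\mathbb{R}_{>0}$; your choice is a specific (and arguably cleaner) instance of theirs.
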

\begin{proof}
The intersection in question may be written
\begin{equation*} 
U_S = \left\{ \parbox{26em}{ real matrices $M \in (|X|_{cone})^n$ so that for any $a, b \in \{1, \ldots, n\}$ and $i \in \{1, \ldots, k \}$ we have $a <_{S_i} b \implies M_{ai} < M_{bi}$ } \right\}.
\end{equation*}
We show that this intersection is star-shaped by finding a star-point matrix whose entries are zero ``wherever possible.''  Write $\varepsilon_i \subseteq \{1, \ldots, n\}$ for the minimal elements of poset $S_i$, and $\eta_i \subseteq \{1, \ldots, n\}$ for the non-minimal elements, so that $\{1, \ldots, n\} = \varepsilon_i \sqcup \eta_i$.  For any non-minimal $b \in \eta_i$ and any $M \in U_S$, it must be that $0 < M_{bi}$.  Indeed, since $b$ is not minimal, there exists $a$ with $a <_{S_i} b$, and so $0 \leq M_{ai} < M_{bi}$.  In other words, every matrix $M \in U_S$ has certain entries that must be nonzero.  We now show that there exists a matrix $M^{\star} \in U_S$ that is nonzero at exactly these times, meaning $M_{ai}^{\star} = 0$ for all $a \in \varepsilon_i$.  In order to fill in the other entries of $M^{\star}$, we must choose positive values for $M_{bi}^{\star}$ with $b \in \eta_i$.  Restrict the partial order on $S_i$ to the subset $\eta_i$.  Extend this restricted order to a total order arbitrarily, and embed the resulting total order in the positive reals $\mathbb{R}_{>0}$.  This embedding extends to a function $\varphi_i \colon S_i \to \mathbb{R}_{\geq 0}$ so that $\varphi_i(a) = 0$ for $a \in \varepsilon_i$ and $\varphi_i(b) > 0$ for $b \in \eta_i$, using minimality.  Setting $M_{ai}^{\star} = \varphi_i(a)$, we have produced a matrix of minimum support: its entries are zero everywhere that a zero is possible.

In order to see that $M^{\star}$ is a star point, choose some other $M \in U_S$.  We must show that every convex combination $M^{\delta} = \delta M^{\star} + (1-\delta) M$ remains in $U_S$, where $0 \leq \delta \leq 1$.  Since every positive entry of $M^{\star}$ is also positive in $M$, the support of $M^{\delta}$ matches the support of $M$.  As a result, the rows of $M^{\delta}$ are still elements of $|X|_{cone}$.  The other conditions defining $U_S$ are of the form $M_{ai} < M_{bi}$, and these conditions are convex, which means they are automatically satisfied by $M^{\delta}$.  In detail, suppose $a <_{S_i} b$.  Then, since both $M^{\star}$ and $M$ are in $U_S$, we have $M_{ai}^{\star} < M_{bi}^{\star}$ and $M_{ai} < M_{bi}$.  It immediately follows that $\delta M_{ai}^{\star} + (1-\delta)M_{ai} < \delta M_{bi}^{\star} + (1-\delta)M_{bi}$, and so $M^{\delta}_{ai} < M^{\delta}_{bi}$ as required.
\end{proof}

\begin{proof}[Proof of Theorem \ref{theorem:local}]
Lemma \ref{lemma:contractible} supplies open subsets of $(|X|_{cone})^n$ that we will use to make a good cover of $\Conf(n, |X|_{cone})$.  We will show that the combinatorics of this cover match the combinatorics defining $L(n, X)$, concluding the proof by the nerve lemma.

For every tuple of posets $S = (S_1, \ldots, S_k) \in \mathcal{S}(n, X)$, we have defined an open set $U_S$.  Using condition (\ref{item:support}) in Definition~\ref{definition:local}, these opens are nonempty.  They are therefore contractible by Lemma~\ref{lemma:contractible}.  Because of (\ref{item:related}), each of these opens is a subset of configuration space $U_S \subseteq \Conf(n, |X|_{cone})$.

Given any configuration of distinct points in $|X|_{cone}$, we may record the partial order on $\{1, \ldots, n\}$ induced by each coordinate.  Note that the ordering may not be a total order because two points may tie in any given coordinate.  However, in order for the points to be distinct, any two must differ in some coordinate.  This argument shows that the sets $U_S$ with $S \in \mathcal{S}(n, X)$ form an open cover.

Next, we note that any intersection of open subsets drawn from the cover $\{ \, U_S \, \}_{S \in \mathcal{S}(n, X)}$ is either empty or else still a set from the cover.  The reason is that a collection of partial orders on $\{1, \ldots, n \}$, when simultaneously imposed and closed under transitivity, either leads to a contradiction or else to a partial order extending all of those in the collection in a minimal way.

The vertices of $L(n, X)$ consist of the minimal elements of $\mathcal{S}$.  The corresponding open sets form an open cover since any other $U_S$ contained in one with $S$ minimal.  An intersection of such opens is non-empty exactly when there is some $S \in \mathcal{S}$ so that $U_S$ is the intersection.  We have already seen that $U_S$ is contractible, and so $L(n, X)$ is the simplicial complex that records the combinatorics of the good cover provided by the minimal elements of $\mathcal{S}$.  The result then follows from Borsuk's nerve lemma.
\end{proof}

\bibliographystyle{amsalpha}
\bibliography{math}

\end{document}